\documentclass[12pt,twoside,reqno]{amsart}
\usepackage{pifont}
\usepackage{amsmath}
\usepackage{amsthm}
\usepackage{amsfonts}
\usepackage{amssymb}
\usepackage{latexsym}
\usepackage{amstext}
\usepackage{array}
\usepackage{graphicx}
\usepackage{amsthm}

\usepackage{amsthm}

\date{}
\allowdisplaybreaks[4] \footskip=15pt
\renewcommand{\uppercasenonmath}[1]{}

\newtheorem{thm}[subsection]{Theorem}
\newtheorem{cor}[subsection]{Corollary }
\newtheorem{Def}[subsection]{Definition}
\newtheorem{lem}[subsection]{Lemma}
\newtheorem{remark}[subsection]{Remark}

\newtheorem{prop}[subsection]{Proposition}
\newtheorem{exm}[subsection]{Example}
  
\newcommand{\bthm}{\begin{thm} }
\newcommand{\ethm}{\end{thm} }
\newcommand{\bpro}{\begin{prop}}
\newcommand{\epro}{\end{prop}}
\newcommand{\bdf}{\begin{Def}}
\newcommand{\edf}{\end{Def}}
\newcommand{\bexm}{\begin{exm}}
\newcommand{\eexm}{\end{exm}}
\newcommand{\blem}{\begin{lem}}
\newcommand{\elem}{\end{lem}}
\newcommand{\bpf}{\begin{proof}}
\newcommand{\epf}{\end{proof}}
\newcommand{\bcor}{\begin{cor}}
\newcommand{\ecor}{\end{cor}}
\newcommand{\ba}{\begin{array}}
\newcommand{\ea}{\end{array}}
\newcommand{\bea}{\begin{eqnarray}}
\newcommand{\eea}{\end{eqnarray}}

\newcommand{\brem}{\begin{remark}}
\newcommand{\erem}{\end{remark}}

\newsavebox{\tablebox}

\begin{document}

\begin{center}
{\large  \bf A linear algebraic proof of the Laplacian spread conjecture}
\footnote {Supported by NSFC (Nos.  12071209, 12231009).}\\

 \vskip 0.8cm
 {\small  Bocheng Li \ \ and\ \ Qingzhong Ji \footnote{Corresponding author.\\ \indent E-mail addresses: borchenli@smail.nju.edu.cn (B. Li),\; qingzhji@nju.edu.cn (Q. Ji)}}\\
{\small School of Mathematics, Nanjing University, Nanjing
210093, P.R.China}

\vskip 3mm
\end{center}

{\bf Abstract:} {\small For a graph $G,$ let $\alpha(G)$ denote its second smallest Laplacian eigenvalue. The Laplacian Spread Conjecture is that $\alpha(G)+\alpha(\overline{G}) \geq 1,$ where $\overline{G}$ is the complement of $G.$ In this article, we provide a new proof of the Laplacian spread conjecture by means of linear algebra, which is more concise. }

{\bf Keywords:} Laplacian Spread Conjecture; linear algebraic; eigenvalues.

{\bf MSC:} 05C50.

\section{\bf Introduction}\label{1}

All concepts used in this paper can be found in \cite{BM} \cite{GS} and in the articles  cited below, unless defined otherwise. Let $G$ be an undirected and unweighted simple graph with vertex set $V(G)=\{1,2,\ldots , n\}$ and edge set $E(G)=\{(i,j)| i,j \in V(G)\}.$ For a vertex $i$, we denote by $d_i$ its degree, i.e., the number of edges incident to $i$. The adjacency matrix of $G$ denoted by $A(G) = [a_{ij}],$ is a square matrix whose entries are indexed by $n \times n$ and $a_{ij} = 1$ if $\{i,j\} \in E(G)$ and $0$ otherwise, where $n$ is the order of $G$, i.e. the number of the vertices. Obviously, $A(G)^{T}=A(G).$ We denote the Laplacian matrix of $G$ by $L = D - A,$ where $A$ is the adjacency matrix of $G$ and $D$ is a diagonal matrix of vertex degrees, i.e. whose $ith$ diagonal entry is $d(i).$

Let $x , y \in \mathbb{R}^n$ are vectors, we denote their inner product by $<x,y> =\sum_{1 \leq i \leq n}x_{i}x_{i}.$ Denote the norm of $x$ by $\parallel x \parallel = \sqrt{<x,x>}.$ We say two vectors $x , y \in R^{n}$ are orthonormal if $\parallel x \parallel = \parallel y \parallel =1$ and $<x,y> =0.$ 

We denote the Laplacian eigenvalues of $G$ by
           $$0= \mu_{n}(G) \leq \mu_{n-1}(G) \leq \cdots \leq \mu_{1}(G).$$
We denote the complement of $G$ by $\overline{G}$. It is well-known that the eigenvalues of $L_{\overline{G}}$ are
      $$0= \mu_{n}(\overline{G}) \leq n- \mu_{1}(G) \leq \cdots \leq n- \mu_{n-1}(G).$$
In \cite{F1}, Fiedler defined the algebraic connectivity of the graph $G$ by $\alpha(G) = \mu_{n-1}(G).$ A related and useful quantity is $\beta(G) = \mu_{1}(G) = n - \alpha(\overline{G}),$ where $\overline{G}$ denotes the complement of the graph $G.$

The Laplacian spread of a graph $G$ is defined as $\beta(G)-\alpha(G).$ Clearly, $\beta(G)-\alpha(G) \leq n.$ In \cite{LY} and \cite{ZSH}, it was conjectured that $n$ can be replaced with $n-1.$ M. Zhai et al.\cite{ZSH} formally proposed the following conjecture, which we refer to as the Laplacian spread conjecture, denoted by LSC. This conjecture has several equivalent formulations, among which the most concise and most frequently studied is the one in terms of algebraic connectivity: \\
\textbf{Laplacian Spread Conjecture (LSC):} For any graph $G$ of order $n \geq 2,$ the following holds:
          $$\beta(G)-\alpha(G) \leq n-1,$$
or equivalently $\alpha(G) + \alpha(\overline{G}) \geq 1,$ with equality if and only if $G$ or $\overline{G}$ is isomorphic to the join of an isolated vertex and a disconnected graph of order $n-1.$

Over a decade of effort, researchers gradually accumulated evidence by verifying the conjecture for various special classes of graphs. The Laplacian spread conjecture has been established for trees \cite{FXWL}, unicyclic graphs \cite{BTF}, bicyclic graphs \cite{LW}, tricyclic graphs \cite{CW}, cacti \cite{L}, quasi-tree graphs \cite{XM}, graphs with diameter not equal to 3 \cite{ZSH}, bipartite graphs \cite{AT}, K3-free graphs \cite{CD} and $t$-quasi-regular graphs of order $n$ when $t \leq \sqrt{n-3+\frac{2}{n}}$ \cite{CD}. In 2019, B.~Afshari and S.~Akbari \cite{AA} gave another characterisation of the Laplacian spread conjecture by establishing an equivalence between graphs and vectors. More precisely, they proved that the following two statements are equivalent:

(\romannumeral1) For every graph \(G\) on \(n\ge 2\) vertices,
\[
\alpha(G)+\alpha(\overline G)\ge 1.
\]

(\romannumeral2) For any two orthogonal unit vectors \(x,y\in\mathbb R^n\) with \(x\perp e,\ y\perp e\),
\[
\|\nabla_x-\nabla_y\|^2 \ge 2,
\]
where \(\nabla_x\in\mathbb R^{\binom n2}\) is the vector of length \(\frac{n(n-1)}2\) whose entry at position \(ij\) (with \(i<j\)) is \(|x_i-x_j|\), and \(e\) is the all-one vector.

Subsequently, many researchers attempted to prove the conjecture by establishing statement (ii) above, but no breakthrough was achieved. Finally, in 2021, M.~Einollahzadeh and M.~M.~Karkhaneei \cite{EK} gave a proof of the Laplacian spread conjecture. Their proof relied on a detailed structural analysis of graphs, introducing the powerful tool of effective resistance together with Fiedler's lemma \cite{F1}. However, their proof is somewhat involved. In this paper, we give a new short proof of the Laplacian spread conjecture using only linear algebraic methods. For more information on the Laplacian spread conjecture, we refer the reader to \cite{AA, AAMM, AGRR, BTF}.

\section{\bf Preliminaries and notations}\label{2}

Let $A=(a_{ij})$ be a real symmetric $0$-$1$ matrix of order $n$ with $a_{ii}=0$. Denote
$$
N_i(A)=\{j: a_{ij}=1\}, \qquad d_i(A)=|N_i(A)|.
$$
Let $e \in \mathbb{R}^n$ be the all-one vector, i.e., $e=(1,1, \cdots, 1)^{T}$, we denote the Laplacian matrix of $A$ by 
$$
L_A=\operatorname{diag}(Ae)-A.
$$ 

For any $z=(z_1,\ldots,z_n)^T\in\mathbb{R}^n$,
$$
z^T L_A z=\sum_{\substack{1\leq i<j\leq n \\ a_{ij}=1}}(z_i-z_j)^2.
$$
For simplicity, we denote $z^T L_A z$ by $E_A(z)$.

Define a $0$-$1$ matrix $A^{[2]}=(a_{ij}^{[2]})$ by
$$
a_{ij}^{[2]}=
\begin{cases}
1, & i\neq j,\ a_{ij}=0,\ \sum_{k=1}^n a_{ik}a_{kj}>0,\\[2mm]
0, & \text{otherwise.}
\end{cases}
$$
In other words, $A^{[2]}$ records precisely those off-diagonal zero entries of $A$ that share a common support after one step of matrix multiplication.

Let \(A\) be an \(n\times n\) symmetric \(0\)-\(1\) matrix, and set \(\bar A=J-I-A\), where \(J\) is the all-one matrix and \(I\) is the identity matrix. The Laplacian eigenvalues of \(A\) are denoted by
$$
0=\mu_n(L_A)\le \mu_{n-1}(L_A)\le \cdots \le \mu_1(L_A),
$$
where \(\mu_i(L_A)\) is the \(i\)-th largest eigenvalue of \(L_A\). In particular, we write
\[
\beta(A)=\mu_1(L_A),\qquad \alpha(A)=\mu_{n-1}(L_A).
\]

Since
$$
L_A+L_{\bar A}=nI-J,
$$
hence
$$
\beta(A)=n-\alpha(\bar A).
$$

We now state the matrix version of the Laplacian spread conjecture:

For any \(n\times n\) symmetric \(0\)-\(1\) matrix \(A\) with zero diagonal (\(n\ge 2\)), and for its Laplacian \(L_A\), we have
\[
\alpha(A)+\alpha(\bar A)\ge 1,
\]
or equivalently,
\[
\beta(A)-\alpha(A)\le n-1.
\]

In this paper, we prove the matrix version of the Laplacian spread conjecture using linear algebraic methods, and consequently establish the graph version as well.

\section{\bf Main results}\label{3}

In this chapter, we present a more concise proof of the Laplacian spread conjecture based on linear algebra.

\begin{thm}\label{thm3.1}
Let \(A=(a_{ij})\) be an \(n\times n\) real symmetric \(0\)-\(1\) matrix with \(a_{ii}=0\) for \(i\in\{1,2,\ldots,n\}\). Then for any \(z\in\mathbb R^n\), we have
$$
(n-1)E_A(z)\geq E_{A^{[2]}}(z).
$$
\end{thm}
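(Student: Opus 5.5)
The plan is to expand $E_{A^{[2]}}(z)$ pair by pair and route each term through common neighbours in $A$, so that it is charged to edges of $A$. I will then show that every edge $\{i,k\}$ of $A$ receives total weight at most $n-1$.

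Fix a pair $\{i,j\}$ with $a^{[2]}_{ij}=1$. Let $C_{ij}=N_i(A)\cap N_j(A)$, which is nonempty by definition. For every $k\in C_{ij}$, with $x_i=z_i-z_k$ and $x_j=z_j-z_k$, we have $(z_i-z_j)^2=(x_i-x_j)^2$. I will average this over $k\in C_{ij}$ and then regroup the whole sum by the middle vertex $k$. For a fixed $k$, the pairs charged to $k$ form a graph $P_k$ on $N_k(A)$: the non-adjacent pairs of neighbours of $k$. Its contribution is a quadratic form $x^TL_{P_k}x$ in the centred variables $x_i=z_i-z_k$, $i\in N_k(A)$, possibly with weights $1/|C_{ij}|$.

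The basic tool is the star identity
\[
\sum_{\{i,j\}\subseteq S}(x_i-x_j)^2=|S|\sum_{i\in S}x_i^2-\Big(\sum_{i\in S}x_i\Big)^2 .
\]
Applied with $S=N_k(A)$, it bounds the $k$-th contribution by $d_k(A)\sum_{i\in N_k(A)}(z_i-z_k)^2$. This already settles the case where $A$ is a star, with equality up to the square term. Summed naively over $k$, however, it gives each edge $\{i,k\}$ the coefficient $d_i+d_k$, which can reach $2n-2$.

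The refinement I will aim for is to count more carefully. Through the centre $k$, the edge $\{i,k\}$ only serves pairs $\{i,j\}$ with $j\in N_k(A)\setminus (N_i(A)\cup\{i\})$. Through the centre $i$, it only serves pairs $\{k,j\}$ with $j\in N_i(A)\setminus(N_k(A)\cup\{k\})$. These two index sets are disjoint subsets of $V\setminus\{i,k\}$, so their sizes $p_i^{(k)}$ and $p_k^{(i)}$ satisfy $p_i^{(k)}+p_k^{(i)}\le n-2$. Hence it suffices to prove a local inequality of the form
\[
x^TL_{P_k}x\;\le\;\sum_{i\in N_k(A)}\big(p_i^{(k)}+\tfrac12\big)x_i^2 ,
\]
or any variant in which the extra constants, summed over the two endpoints of each edge, add at most $1$. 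Summing over $k$ would then give the coefficient $(n-2)+1=n-1$ on every edge of $A$.

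This local spectral inequality is the main obstacle. The bound $\operatorname{diag}(c)-L_{P_k}\succeq 0$ with $c_i=p_i+O(1)$ fails for a star-shaped $P_k$, since $\operatorname{diag}(c)-L_{P_k}=\operatorname{diag}(c-p)+A_{P_k}$ and $A_{P_k}$ has eigenvalue $-\sqrt{m}$. So the charging cannot be done with one fixed middle vertex per pair. My first attempt will keep the freedom of distributing each pair $\{i,j\}$ among all $k\in C_{ij}$ with nonnegative weights. I will also keep, rather than discard, the negative terms $-(\sum_{i\in S}x_i)^2$ from the star identity. The idea is to interpolate between the two bounds $d_k$ and $p_i^{(k)}+O(1)$, choosing the weights so that the deficit at a high-degree vertex of $P_k$ is absorbed by the slack $n-2-p_i^{(k)}-p_k^{(i)}$ on the edges at the other endpoint. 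Should this fail, I would fall back on a direct inequality $x^TL_{P_k}x\le \max(\ldots)\|x\|^2$ via the bound $\lambda_{\max}(L_P)\le\max_{ij\in P}(p_i+p_j)$, and combine it with the star identity.
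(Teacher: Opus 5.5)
Your proposal stops where the proof has to happen. You show yourself that the local inequality $x^TL_{P_k}x\le\sum_{i\in N_k}(p_i^{(k)}+\tfrac12)x_i^2$ fails. The remedies are only announced: redistributing pairs over $C_{ij}$, keeping the negative squares, absorbing deficits into slack, and the fallback $\lambda_{\max}(L_P)\le\max(p_i+p_j)$. Worse, the target you call sufficient is false, so no redistribution over common neighbours can reach it. Any scheme giving edge $\{i,k\}$ total weight at most $p_i^{(k)}+p_k^{(i)}+1$ would prove
\[
E_{A^{[2]}}(z)\le\sum_{ik\in E(A)}\bigl(p_i^{(k)}+p_k^{(i)}+1\bigr)(z_i-z_k)^2 .
\]
For a counterexample, take the paw: a triangle $k w_1 w_2$ with a pendant vertex $i_0$ at $k$, and $z_k=0$, $z_{i_0}=-2$, $z_{w_1}=z_{w_2}=1$. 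Your weights on $ki_0,kw_1,kw_2,w_1w_2$ are $3,2,2,1$, so the right side is $3\cdot4+2\cdot1+2\cdot1+1\cdot0=16$. But $E_{A^{[2]}}(z)=2\cdot 9=18$. Both $A^{[2]}$-pairs have $k$ as their only common neighbour, so the weights over $C_{ij}$ give no freedom. Here $(n-1)E_A(z)=18$, so the theorem is tight. A proof must therefore spend the slack $n-2-p_w^{(k)}-p_k^{(w)}$ on the edges $kw$. That is a global statement, not a spectral bound for a single $P_k$, and it is exactly the part you have not supplied.

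The paper starts from your star identity but uses it in a different way. At every centre $k$ it applies the identity to \emph{all} pairs in $N_k$, adjacent ones included. It also counts each non-edge with multiplicity $c_{ij}=|N_i\cap N_j|$ rather than averaging. The negative terms $\bigl(\sum_{i\in N_k}(z_i-z_k)\bigr)^2$ are exactly $(L_Az)_k^2$, so summing over $k$ gives
\[
\sum_{a_{ij}=0}c_{ij}(z_i-z_j)^2=\sum_{a_{ij}=1}|N_i\cup N_j|\,(z_i-z_j)^2-\|L_Az\|^2 ,
\]
which is the paper's expansion of $z^TL_A^2z\ge0$. The adjacent pairs inside $N_k$ that you discard are what lower your coefficient from $d_i+d_k$ to $d_i+d_k-c_{ik}=|N_i\cup N_k|\le n$. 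This settles every case without a dominating edge, i.e.\ an edge with $N_i\cup N_j=V$. Dominating edges are handled by induction on $n$:
\begin{itemize}
\item Split $A$ into $A_1$ (the edges meeting $i$ or $j$) and $B$ (the induced matrix on $V\setminus\{i,j\}$).
\item Using $V=\{i,j\}\sqcup X\sqcup Y\sqcup Z$, check that $A^{[2]}\subseteq A_1^{[2]}\cup B^{[2]}$.
\item Handle a universal vertex directly.
\item Verify $(n-1)E_{A_1}\ge E_{A_1^{[2]}}$ by an explicit Cauchy--Schwarz computation.
\end{itemize}
Keeping the triangle terms and treating dominating edges inductively are the two ideas your plan lacks.
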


\begin{proof} Consider \(A\) as the adjacency matrix of a simple graph \(G\) on the vertex set
\[
V=\{1,\ldots,n\}.
\]
For \(i\ne j\), denote
\[
c_{ij}=|N_i\cap N_j|.
\]
Then
\[
a^{[2]}_{ij}=1
\quad\Longleftrightarrow\quad
a_{ij}=0\ \text{and}\ c_{ij}>0.
\]

First compute \(L_A^2\). Since
\[
L_A=\operatorname{diag}(d_1,\ldots,d_n)-A,
\]
we have
\[
(L_A^2)_{ij}
=
\begin{cases}
d_i^2+d_i,& i=j,\\
-a_{ij}(d_i+d_j)+c_{ij},& i\ne j.
\end{cases}
\]
Also, because \(L_A e=0\), we get
\[
L_A^2e=0.
\]
Hence \(L_A^2\) is a real symmetric matrix with zero row sums. For any real symmetric matrix
\(M=(m_{ij})\) with zero row sums, the identity holds:
\[
z^TMz=\sum_{1\le i<j\le n}(-m_{ij})(z_i-z_j)^2.
\]
Substituting \(M=L_A^2\), we obtain
\[
\begin{aligned}
z^TL_A^2z
={}&
\sum_{\substack{1\le i<j\le n\\ a_{ij}=1}}
(d_i+d_j-c_{ij})(z_i-z_j)^2 \\
&-
\sum_{\substack{1\le i<j\le n\\ a_{ij}=0}}
c_{ij}(z_i-z_j)^2.
\end{aligned}
\]
On the other hand,
\[
z^TL_A^2z=\|L_Az\|^2\ge 0.
\]
Thus
\begin{equation}\label{eq:A2-basic}
\sum_{\substack{1\le i<j\le n\\ a_{ij}=0}}
c_{ij}(z_i-z_j)^2
\le
\sum_{\substack{1\le i<j\le n\\ a_{ij}=1}}
(d_i+d_j-c_{ij})(z_i-z_j)^2.
\end{equation}
When \(a_{ij}=1\),
\[
d_i+d_j-c_{ij}
=
|N_i\cup N_j|.
\]
Therefore, from \eqref{eq:A2-basic} we get
\begin{equation}\label{eq:A2-basic-union}
\sum_{\substack{1\le i<j\le n\\ a_{ij}=0}}
c_{ij}(z_i-z_j)^2
\le
\sum_{\substack{1\le i<j\le n\\ a_{ij}=1}}
|N_i\cup N_j|(z_i-z_j)^2.
\end{equation}
Also, from \(a^{[2]}_{ij}=1\) we have \(a_{ij}=0\) and \(c_{ij}\ge 1\), so
\begin{equation}\label{eq:A2-left}
E_{A^{[2]}}(z)
\le
\sum_{\substack{1\le i<j\le n\\ a_{ij}=0}}
c_{ij}(z_i-z_j)^2.
\end{equation}

If for every edge \(ij\) of \(G\),
\[
N_i\cup N_j\ne V,
\]
then
\[
|N_i\cup N_j|\le n-1
\]
for each edge \(ij\). From \eqref{eq:A2-basic-union} and
\eqref{eq:A2-left} we immediately obtain
\[
E_{A^{[2]}}(z)\le (n-1)E_A(z).
\]
Therefore, it suffices to handle the case where there exists an edge \(ij\) satisfying
\[
N_i\cup N_j=V.
\]
Call such an edge a dominating edge.

We now proceed by induction on \(n\). The case \(n\le 2\) is trivial. Assume the statement holds for all matrices of order less than \(n\). Suppose \(A\) is a counterexample of order \(n\); then there exists \(z\in\mathbb R^n\) such that
\begin{equation}\label{eq:A2-counter}
(n-1)E_A(z)<E_{A^{[2]}}(z).
\end{equation}
From the previous discussion, \(G\) must contain a dominating edge.

If all dominating edges \(ij\) satisfy \(z_i=z_j\), then on the right-hand side of
\eqref{eq:A2-basic-union}, all terms corresponding to edges with
\[
|N_i\cup N_j|=n
\]
are zero, and for the remaining edges we have
\[
|N_i\cup N_j|\le n-1.
\]
Thus we still have
\[
E_{A^{[2]}}(z)\le (n-1)E_A(z),
\]
contradicting \eqref{eq:A2-counter}. Hence we can choose a dominating edge \(ij\) such that
\begin{equation}\label{eq:dom-edge}
N_i\cup N_j=V, \ \ z_i\ne z_j.
\end{equation}

Define
\[
X=N_i\setminus (N_j\cup\{j\}), \ \ 
Y=N_j\setminus (N_i\cup\{i\}), \ \ 
Z=N_i\cap N_j.
\]
Then
\[
V=\{i,j\}\sqcup X\sqcup Y\sqcup Z.
\]

If \(X=\varnothing\), then from \(N_i\cup N_j=V\) we get
\[
N_j=V\setminus\{j\}.
\]
Indeed, if \(v\ne j\), then when \(v=i\) clearly \(v\in N_j\); when
\(v\ne i,j\), from \(N_i\cup N_j=V\) we have \(v\in N_i\) or
\(v\in N_j\). If \(v\in N_i\), since \(X=\varnothing\), we must have
\(v\in N_j\). Hence every \(v\ne j\) belongs to \(N_j\), i.e.
\[
N_j=V\setminus\{j\}.
\]
Thus \(j\) is a universal vertex. In this case, \(A^{[2]}\) has no edges incident to \(j\), so
\[
E_{A^{[2]}}(z)
\le
\sum_{\substack{r<s\\ r,s\ne j}}(z_r-z_s)^2.
\]
For the \(N=n-1\) numbers
\[
u_r=z_r-z_j \ \ (r\ne j),
\]
using the identity
\[
\sum_{r<s}(u_r-u_s)^2
=
N\sum_r u_r^2-\left(\sum_r u_r\right)^2
\le
N\sum_r u_r^2,
\]
we get
\[
\sum_{\substack{r<s\\ r,s\ne j}}(z_r-z_s)^2
\le
(n-1)\sum_{r\ne j}(z_r-z_j)^2.
\]
Since \(j\) is universal,
\[
\sum_{r\ne j}(z_r-z_j)^2\le E_A(z).
\]
Hence
\[
E_{A^{[2]}}(z)\le (n-1)E_A(z),
\]
contradicting \eqref{eq:A2-counter}. Therefore
\[
X\ne\varnothing.
\]
Similarly,
\[
Y\ne\varnothing.
\]

Let \(A_1\) be the adjacency matrix of the subgraph of \(G\) obtained by keeping only the edges incident to \(i\) or \(j\). That is, the edges of \(A_1\) are exactly
\[
ij,\qquad ir\ (r\in X\cup Z),\qquad js\ (s\in Y\cup Z).
\]
Let \(B\) be the principal submatrix of \(A\) obtained by deleting the \(i\)-th and \(j\)-th rows and columns.
If \(z_1\) denotes the restriction of \(z\) to the vertex set \(V\setminus\{i,j\}\), then
\[
E_A(z)=E_{A_1}(z)+E_B(z_1).
\]

We first prove the inequality
\begin{equation}\label{eq:A1-key}
(n-1)E_{A_1}(z)\ge E_{A_1^{[2]}}(z).
\end{equation}

Set
\[
m=|X|,\qquad \ell=|Y|,\qquad h=|Z|.
\]
From the above,
\[
m,\ell\ge 1,\qquad n=m+\ell+h+2.
\]
Let
\[
L=z_i-z_j.
\]
For \(r\in X\), write
\[
\alpha_r=z_r-z_i,
\]
for \(s\in Y\), write
\[
\beta_s=z_s-z_j,
\]
and for \(t\in Z\), write
\[
\gamma_t=z_t-z_i.
\]
Thus
\[
z_t-z_j=\gamma_t+L.
\]
Also define
\[
U=\sum_{r\in X}\alpha_r,\qquad
T=\sum_{s\in Y}\beta_s,\qquad
S=\sum_{t\in Z}\gamma_t,\qquad
Q=\sum_{t\in Z}\gamma_t^2.
\]
If \(Z=\varnothing\), we set
\[
S=Q=0.
\]

From the definition, we have
\[
\begin{aligned}
E_{A_1}(z)
={}&
L^2+\sum_{r\in X}\alpha_r^2
+\sum_{s\in Y}\beta_s^2
+\sum_{t\in Z}\gamma_t^2
+\sum_{t\in Z}(\gamma_t+L)^2.
\end{aligned}
\]

Next, we estimate \(E_{A_1^{[2]}}(z)\). In the graph \(A_1\),
\[
N_i(A_1)=X\cup Z\cup\{j\},
\qquad
N_j(A_1)=Y\cup Z\cup\{i\}.
\]
Thus the edges of \(A_1^{[2]}\) can only arise from non-edges inside these two neighbourhoods. If two vertices both lie in
\(N_i(A_1)\) and are nonadjacent in \(A_1\), they have a common neighbour
\(i\); similarly, if two vertices both lie in \(N_j(A_1)\) and are nonadjacent in \(A_1\), they have a common neighbour \(j\).

Notice that the two large sums below may include pairs that do not belong to \(A_1^{[2]}\).
For example, pairs that are already edges of \(A_1\) cannot belong to \(A_1^{[2]}\); meanwhile,
pairs inside \(Z\) belong to both \(N_i(A_1)\) and \(N_j(A_1)\), hence would be counted twice;
also, for each \(t\in Z\), the pairs \(\{i,t\}\) and \(\{j,t\}\) are already edges of
\(A_1\), so they cannot be counted in \(A_1^{[2]}\). Therefore we have the upper bound
\begin{equation}\label{eq:A1-upper}
\begin{aligned}
E_{A_1^{[2]}}(z)
\le{}&
\sum_{\{u,v\}\subseteq N_i(A_1)}(z_u-z_v)^2
+
\sum_{\{u,v\}\subseteq N_j(A_1)}(z_u-z_v)^2  \\
&-
\sum_{\{u,v\}\subseteq Z}(z_u-z_v)^2
-
\sum_{t\in Z}(z_j-z_t)^2
-
\sum_{t\in Z}(z_i-z_t)^2.
\end{aligned}
\end{equation}
This is an upper bound rather than equality because the first two large sums may contain other pairs already adjacent in \(A_1\); but the listed duplicate terms and edge terms must be subtracted, so
\eqref{eq:A1-upper} remains valid.

For any finite set \(W\) and any real number \(a\), we have the identity
\[
\sum_{\{u,v\}\subseteq W}(z_u-z_v)^2
=
|W|\sum_{u\in W}(z_u-a)^2
-
\left(\sum_{u\in W}(z_u-a)\right)^2.
\]
Taking \(W=N_i(A_1),a=z_i\) and \(W=N_j(A_1),a=z_j\), respectively, yields
\[
\begin{aligned}
\sum_{\{u,v\}\subseteq N_i(A_1)}(z_u-z_v)^2
={}&
(m+h+1)
\left(
\sum_{r\in X}\alpha_r^2+Q+L^2
\right)
-\bigl(U+S-L\bigr)^2,
\\
\sum_{\{u,v\}\subseteq N_j(A_1)}(z_u-z_v)^2
={}&
(\ell+h+1)
\left(
\sum_{s\in Y}\beta_s^2
+\sum_{t\in Z}(\gamma_t+L)^2
+L^2
\right)  \\
&-
\bigl(T+S+(h+1)L\bigr)^2.
\end{aligned}
\]
Moreover,
\[
\sum_{\{u,v\}\subseteq Z}(z_u-z_v)^2=hQ-S^2,
\]
and
\[
\sum_{t\in Z}(z_i-z_t)^2=Q,\qquad
\sum_{t\in Z}(z_j-z_t)^2
=
\sum_{t\in Z}(\gamma_t+L)^2.
\]

Substituting these expressions into \eqref{eq:A1-upper} and subtracting from
\((n-1)E_{A_1}(z)\), after simplification we obtain
\begin{equation}\label{eq:A1-difference}
\begin{aligned}
&(n-1)E_{A_1}(z)-E_{A_1^{[2]}}(z)  \\
\ge{}&
\ell\sum_{r\in X}\alpha_r^2
+m\sum_{s\in Y}\beta_s^2
+\bigl(U+S-L\bigr)^2
+\bigl(T+S+(h+1)L\bigr)^2  \\
&+
(h+m+\ell+2)Q
-S^2
+
(hm-1)L^2
+
2(m+1)LS.
\end{aligned}
\end{equation}

We now prove that the right-hand side of \eqref{eq:A1-difference} is nonnegative. By Cauchy's inequality,
\[
\sum_{r\in X}\alpha_r^2\ge \frac{U^2}{m},
\qquad
\sum_{s\in Y}\beta_s^2\ge \frac{T^2}{\ell}.
\]
Thus, for any real number \(C\),
\[
\ell\sum_{r\in X}\alpha_r^2+(U+C)^2
\ge
\frac{\ell}{m}U^2+(U+C)^2
\ge
\frac{\ell}{m+\ell}C^2.
\]
Taking \(C=S-L\), we get
\[
\ell\sum_{r\in X}\alpha_r^2+\bigl(U+S-L\bigr)^2
\ge
\frac{\ell}{m+\ell}(S-L)^2.
\]
Similarly, for any real number \(D\),
\[
m\sum_{s\in Y}\beta_s^2+(T+D)^2
\ge
\frac{m}{\ell}T^2+(T+D)^2
\ge
\frac{m}{m+\ell}D^2.
\]
Taking \(D=S+(h+1)L\), we get
\[
m\sum_{s\in Y}\beta_s^2+\bigl(T+S+(h+1)L\bigr)^2
\ge
\frac{m}{m+\ell}\bigl(S+(h+1)L\bigr)^2.
\]
Substituting into \eqref{eq:A1-difference} yields
\begin{equation}\label{eq:A1-lower}
\begin{aligned}
&(n-1)E_{A_1}(z)-E_{A_1^{[2]}}(z)  \\
\ge{}&
\frac{\ell}{m+\ell}(S-L)^2
+
\frac{m}{m+\ell}\bigl(S+(h+1)L\bigr)^2 \\
&+
(h+m+\ell+2)Q
-S^2
+
(hm-1)L^2
+
2(m+1)LS.
\end{aligned}
\end{equation}

If \(h=0\), then \(S=Q=0\). Note that
\[
(hm-1)L^2=-L^2.
\]
From \eqref{eq:A1-lower} we get
\[
\begin{aligned}
(n-1)E_{A_1}(z)-E_{A_1^{[2]}}(z)
&\ge
\frac{\ell}{m+\ell}L^2
+
\frac{m}{m+\ell}L^2
-L^2 \\
&=0.
\end{aligned}
\]
Thus \eqref{eq:A1-key} holds.

If \(h\ge 1\), then
\[
Q\ge \frac{S^2}{h}.
\]
Substituting this into \eqref{eq:A1-lower} and simplifying yields
\[
\begin{aligned}
&(n-1)E_{A_1}(z)-E_{A_1^{[2]}}(z)\\
\ge{}&
\frac{h+m+\ell+2}{h(m+\ell)}
\left(
h^2mL^2+2hmLS+(m+\ell)S^2
\right).
\end{aligned}
\]
The quadratic form in \(L,S\) inside the parentheses is
\[
h^2mL^2+2hmLS+(m+\ell)S^2,
\]
whose associated matrix is
\[
\begin{pmatrix}
h^2m & hm\\
hm & m+\ell
\end{pmatrix}.
\]
Its leading principal minor is \(h^2m>0\), and its determinant is
\[
h^2m(m+\ell)-h^2m^2=h^2m\ell>0.
\]
Since \(m,\ell,h\ge 1\), this quadratic form is positive definite. Therefore the right-hand side is nonnegative, and hence
\[
(n-1)E_{A_1}(z)\ge E_{A_1^{[2]}}(z).
\]
This proves \eqref{eq:A1-key}.

Now return to the induction. Since \(B\) has order \(n-2\), the induction hypothesis gives
\[
(n-3)E_B(z_{1})\ge E_{B^{[2]}}(z_{1}),
\]
where \(z_{1}\) is the vector obtained from \(z\) by deleting the entries corresponding to \(\{i,j\}\).
Consequently,
\begin{equation}\label{eq:B-bound}
(n-1)E_B(z_{1})\ge E_{B^{[2]}}(z_{1}),
\end{equation}
because \(E_B(z_{1})\ge 0\).

Finally, compare \(A^{[2]}\) with \(A_1^{[2]}\) and \(B^{[2]}\). Let \(uv\) be an edge of
\(A^{[2]}\). Then \(uv\) is not an edge of \(A\), but \(u,v\)
have a common neighbour in \(A\).

If \(u\in X,\ v\in Y\), then
\[
u\sim i,\quad u\not\sim j,\qquad
v\sim j,\quad v\not\sim i.
\]
Thus neither \(i\) nor \(j\) is a common neighbour of \(u,v\). Hence any common neighbour of \(u,v\) in \(A\) must lie in
\[
V\setminus\{i,j\}.
\]
Since \(uv\) is not an edge of \(A\), and \(B\) is the induced subgraph of \(A\) on
\(V\setminus\{i,j\}\), it follows that \(uv\) is an edge of \(B^{[2]}\).

Now consider the case where \(uv\) is not a pair with one endpoint in \(X\) and the other in \(Y\). Since
\(uv\) is not an edge of \(A\), from the decomposition
\[
V=\{i,j\}\sqcup X\sqcup Y\sqcup Z
\]
and the definition of \(A_1\), the remaining possibilities force
\[
\{u,v\}\subseteq X\cup Z\cup\{j\},
\]
or
\[
\{u,v\}\subseteq Y\cup Z\cup\{i\}.
\]
In the first case,
\[
u,v\in N_i(A_1);
\]
in the second case,
\[
u,v\in N_j(A_1).
\]
Since \(A_1\) is a subgraph of \(A\) and \(uv\) is not an edge of \(A\), it is certainly not an edge of \(A_1\). Hence \(uv\) is an edge of \(A_1^{[2]}\).

In particular, a pair of the form \(jx\) with \(x\in X\), if it belongs to \(A^{[2]}\), also belongs to
\(A_1^{[2]}\), because
\[
j,x\in N_i(A_1).
\]
Similarly, a pair of the form \(iy\) with \(y\in Y\), if it belongs to \(A^{[2]}\), also belongs to
\(A_1^{[2]}\), because
\[
i,y\in N_j(A_1).
\]

Therefore every edge of \(A^{[2]}\) is covered by either \(A_1^{[2]}\) or \(B^{[2]}\).
Overlaps are allowed, since we only need an upper bound on the energy. Hence
\begin{equation}\label{eq:A2-cover}
E_{A^{[2]}}(z)
\le
E_{A_1^{[2]}}(z)+E_{B^{[2]}}(z_{1}).
\end{equation}

Combining
\[
E_A(z)=E_{A_1}(z)+E_B(z_{1}),
\]
with \eqref{eq:A1-key}, \eqref{eq:B-bound}, and \eqref{eq:A2-cover}, we get
\[
\begin{aligned}
E_{A^{[2]}}(z)
&\le
E_{A_1^{[2]}}(z)+E_{B^{[2]}}(z_{1})\\
&\le
(n-1)E_{A_1}(z)+(n-1)E_B(z_{1})\\
&=
(n-1)E_A(z).
\end{aligned}
\]
This contradicts the counterexample assumption \eqref{eq:A2-counter}. Therefore the case of order \(n\) holds.
By induction, the theorem is proved.
\end{proof}\qed

\begin{cor}\label{cor3.2}
Let $A$ be an $n\times n$ symmetric $0$-$1$ matrix with \(a_{ii}=0\) for \(i\in\{1,\dots,n\}\), and let \(z\in\mathbb R^n\). Define
$$
R(A)=\left\{i: \exists\, j\neq i,\ a_{ij}=0,\ \sum_{k=1}^n a_{ik}a_{kj}=0\right\}.
$$
If \(z\perp \mathbf{1}\) and \(\|z\|=1\), then
$$
E_A(z) \ge 1-\frac{|R(A)|}{n}.
$$
\end{cor}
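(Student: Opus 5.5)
The idea is to split the complete graph on $\{1,\dots,n\}$ into three edge-disjoint pieces and apply Theorem~\ref{thm3.1} to one of them. Every unordered pair $\{i,j\}$ with $i\neq j$ is of exactly one of three types:
\begin{itemize}
\item $a_{ij}=1$, an edge of $A$;
\item $a_{ij}=0$ and $\sum_k a_{ik}a_{kj}>0$, an edge of $A^{[2]}$;
\item $a_{ij}=0$ and $\sum_k a_{ik}a_{kj}=0$.
\end{itemize}
Let $F$ be the symmetric $0$-$1$ matrix recording the pairs of the third type. Then $A+A^{[2]}+F=J-I$. Since the quadratic form $E_M(z)$ is additive in $M$, we get
\[
E_A(z)+E_{A^{[2]}}(z)+E_F(z)=E_{J-I}(z)=\sum_{i<j}(z_i-z_j)^2=n\|z\|^2-\Big(\sum_i z_i\Big)^2 .
\]
For $z\perp \mathbf 1$ and $\|z\|=1$, the right-hand side equals $n$.

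Next we apply Theorem~\ref{thm3.1}, which gives $E_{A^{[2]}}(z)\le (n-1)E_A(z)$. Substituting into the identity above yields
\[
nE_A(z)\ \ge\ n-E_F(z).
\]
It therefore suffices to show that $E_F(z)\le |R(A)|$.

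For this, note that by the definition of $R(A)$, every pair $\{i,j\}$ recorded by $F$ has both endpoints in $R=R(A)$. So $F$ is a subgraph of the complete graph on $R$, and because all terms are nonnegative,
\[
E_F(z)\le \sum_{\{i,j\}\subseteq R}(z_i-z_j)^2=|R|\sum_{i\in R}z_i^2-\Big(\sum_{i\in R}z_i\Big)^2\le |R|\sum_{i\in R}z_i^2\le |R|\,\|z\|^2=|R| .
\]
This uses the same identity $\sum_{\{u,v\}\subseteq W}(z_u-z_v)^2=|W|\sum_{u\in W}z_u^2-(\sum_{u\in W} z_u)^2$ that appears in the proof of Theorem~\ref{thm3.1}, taken with $a=0$.

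Combining the two bounds gives $nE_A(z)\ge n-|R(A)|$, which is the claim after dividing by $n$. The only point needing care is the decomposition step: we must check that the three pair types are mutually exclusive and exhaust all pairs, so that no pair is double counted. The rest is routine once Theorem~\ref{thm3.1} is available.
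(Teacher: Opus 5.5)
Your proof is correct and is essentially the paper's argument. The paper sets $C=A+A^{[2]}$, uses Theorem~\ref{thm3.1} to get $nE_A(z)\ge E_C(z)$, notes that the pairs missing from $C$ (exactly your $F$) lie inside $R(A)\times R(A)$, and bounds their contribution by $|R(A)|$ with the same identity; your explicit decomposition $A+A^{[2]}+F=J-I$ simply makes the equality $E_C(z)=n-E_F(z)$ visible.
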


\begin{proof} Let
$$
C=A+A^{[2]}.
$$
By Theorem \ref{thm3.1},
$$
nE_A(z)\ge E_C(z).
$$
When \(i\notin R(A)\), all off-diagonal entries in the \(i\)-th row of \(C\) are equal to \(1\). Let
$$
t=n-|R(A)|.
$$
Then \(C\) has at least \(t\) rows with the above property. Let \(W\) be the set of remaining indices, so \(|W|=n-t\). Since
$$
\sum_{1\le r<s\le n}(z_r-z_s)^2
= n\|z\|^2 - \langle z,\mathbf{1}\rangle^2 = n,
$$
and all possible missing off-diagonal positions can only lie in \(W\times W\), we have
$$
\begin{aligned}
E_C(z)
&\ge n - \sum_{\substack{r<s\\ r,s\in W}}(z_r-z_s)^2 \\
&= n - \left(|W|\sum_{r\in W}z_r^2 - \left(\sum_{r\in W}z_r\right)^2\right) \\
&\ge n - |W| = t.
\end{aligned}
$$
Therefore,
$$
E_A(z)\ge \frac{t}{n} = 1-\frac{|R(A)|}{n}.
$$
\end{proof}\qed

\medskip

We now prove the matrix version of the Laplacian spread conjecture.

\begin{thm}[Laplacian spread]\label{thm3.3}
Let \(A\) be an \(n\times n\) symmetric \(0\)-\(1\) matrix with zero diagonal, \(n\ge2\), and let \(L_A\) be its Laplacian. Then
$$
\alpha(A)+\alpha(\bar A)\ge 1,
$$
or equivalently,
$$
\beta(A)-\alpha(A)\le n-1.
$$
\end{thm}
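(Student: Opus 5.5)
The plan is to apply Corollary \ref{cor3.2} twice, once to \(A\) and once to \(\bar A=J-I-A\), and then show that the two sets \(R(A)\) and \(R(\bar A)\) are disjoint. By the Courant--Fischer characterization, since \(L_Ae=0\) and \(L_{\bar A}e=0\), we have
\[
\alpha(A)=\min\{E_A(z):\ z\perp \mathbf 1,\ \|z\|=1\},\qquad
\alpha(\bar A)=\min\{E_{\bar A}(w):\ w\perp \mathbf 1,\ \|w\|=1\}.
\]
Let the minimizers be \(z\) and \(w\). They may be different vectors, which does no harm because Corollary \ref{cor3.2} holds for every admissible vector. It gives
\[
\alpha(A)\ge 1-\frac{|R(A)|}{n},\qquad \alpha(\bar A)\ge 1-\frac{|R(\bar A)|}{n}.
\]
Adding these, it suffices to prove \(|R(A)|+|R(\bar A)|\le n\). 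For that it is enough to show \(R(A)\cap R(\bar A)=\varnothing\), since both sets lie in \(\{1,\dots,n\}\). The equivalent form \(\beta(A)-\alpha(A)\le n-1\) then follows from the identity \(\beta(A)=n-\alpha(\bar A)\) established in Section \ref{2}.

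The key combinatorial step is the disjointness, which I expect to be the only real point, and I would argue it by contradiction. Suppose \(i\in R(A)\cap R(\bar A)\). Since \(i\in R(A)\), there is a \(j\neq i\) with \(a_{ij}=0\) such that \(i\) and \(j\) have no common neighbour in \(A\). Since \(i\in R(\bar A)\), there is a \(j'\neq i\) with \(\bar a_{ij'}=0\), that is \(a_{ij'}=1\), such that \(i\) and \(j'\) have no common neighbour in \(\bar A\). The latter condition says that every \(k\notin\{i,j'\}\) is adjacent in \(A\) to \(i\) or to \(j'\).

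Now \(j\neq j'\), because \(a_{ij}=0\) while \(a_{ij'}=1\). Apply the covering property to \(k=j\). We have \(a_{ij}=0\), so \(j\) is not adjacent to \(i\) in \(A\), and therefore \(a_{jj'}=1\). But then \(j'\) is adjacent in \(A\) to both \(i\) and \(j\), so \(j'\) is a common neighbour of \(i\) and \(j\) in \(A\). This contradicts the choice of \(j\).

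Hence the two sets are disjoint, so \(|R(A)|+|R(\bar A)|\le n\). Summing the two bounds then gives
\[
\alpha(A)+\alpha(\bar A)\ge 2-\frac{|R(A)|+|R(\bar A)|}{n}\ge 1.
\]
The remaining details are routine: the variational characterization, and the case \(n\ge 2\), which guarantees that a unit vector orthogonal to \(\mathbf 1\) exists.
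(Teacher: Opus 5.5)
Your proposal is correct and follows the paper's proof: you apply Corollary~\ref{cor3.2} to both $A$ and $\bar A$, then show $R(A)\cap R(\bar A)=\varnothing$, which gives $|R(A)|+|R(\bar A)|\le n$. Your disjointness argument by contradiction (forcing $a_{jj'}=1$, so that $j'$ becomes a common $A$-neighbour of $i$ and $j$) is just the contrapositive of the paper's direct argument, which shows that the witness $j$ is a common $\bar A$-neighbour of $i$ and of every $A$-neighbour $r$ of $i$.
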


\begin{proof} By Corollary \ref{cor3.2},
$$
\alpha(A)\ge 1-\frac{|R(A)|}{n},\qquad
\alpha(\bar A)\ge 1-\frac{|R(\bar A)|}{n}.
$$
We next show that
$$
R(A)\cap R(\bar A)=\varnothing.
$$
If \(i\in R(A)\), then there exists \(j\ne i\) such that
$$
a_{ij}=0,\qquad \sum_{k=1}^n a_{ik}a_{kj}=0.
$$
Take any \(r\ne i\). If \(a_{ir}=0\), then \(\bar a_{ir}=1\); if \(a_{ir}=1\), then from \(\sum_k a_{ik}a_{kj}=0\) we get \(a_{rj}=0\), hence
$$
\bar a_{ij}\,\bar a_{jr}=1.
$$
Thus for every \(r\ne i\), either \(\bar a_{ir}=1\) or \(\sum_k \bar a_{ik}\bar a_{kr}>0\). This precisely means that \(i\notin R(\bar A)\). Hence \(R(A)\) and \(R(\bar A)\) are disjoint, so
$$
|R(A)|+|R(\bar A)|\le n.
$$
Consequently,
$$
\alpha(A)+\alpha(\bar A)
\ge 2-\frac{|R(A)|+|R(\bar A)|}{n}
\ge 1.
$$
This proves the theorem.
\end{proof}\qed

\medskip

If we take \(G\) to be the graph with adjacency matrix \(A\), then Theorem \ref{thm3.3} directly implies the Laplacian spread conjecture for graphs.

\begin{cor}\label{cor3.4}
For every graph \(G\) on \(n\ge2\) vertices, the following inequality holds:
$$
\alpha(G)+\alpha(\overline G)\ge 1,
$$
or equivalently,
$$
\beta(G)-\alpha(G)\le n-1.
$$
\end{cor}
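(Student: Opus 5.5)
The plan is to obtain Corollary \ref{cor3.4} as a direct specialization of Theorem \ref{thm3.3}. The bridge is the dictionary between graphs and matrices already set up in Section \ref{2}.

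First, given a graph \(G\) on \(n\ge 2\) vertices, I will take its adjacency matrix \(A=A(G)\). This is a symmetric \(0\)-\(1\) matrix with zero diagonal. By construction \(L_A=\operatorname{diag}(Ae)-A=D-A\) is exactly the graph Laplacian of \(G\), so \(\alpha(A)=\alpha(G)\) and \(\beta(A)=\beta(G)\).

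Second, I will check that the complement corresponds to \(\bar A\). The complement \(\overline G\) has adjacency matrix \(J-I-A=\bar A\): for \(i\ne j\) we have \(\bar a_{ij}=1\) exactly when \(a_{ij}=0\), and the diagonal stays zero. Hence \(\alpha(\overline G)=\alpha(\bar A)\).

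Third, Theorem \ref{thm3.3} applied to \(A\) gives \(\alpha(G)+\alpha(\overline G)\ge 1\). For the equivalent form I will use the identity \(L_A+L_{\bar A}=nI-J\) from Section \ref{2}, which gives \(\beta(G)=n-\alpha(\overline G)\). Substituting this turns the first inequality into \(\beta(G)-\alpha(G)\le n-1\), and the substitution works in both directions.

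There is no real obstacle here; the only point to state explicitly is the identification \(A(\overline G)=\bar A\), so that the hypotheses of Theorem \ref{thm3.3} are met for every simple graph. Degenerate cases such as the empty graph or the complete graph are covered automatically, because the theorem places no connectivity assumption on \(A\).
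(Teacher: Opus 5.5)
Your proposal is correct and follows the paper's route: the paper also obtains Corollary~\ref{cor3.4} by applying Theorem~\ref{thm3.3} to the adjacency matrix $A$ of $G$, with $L_A=D-A$ the graph Laplacian and $\bar A=J-I-A$ the adjacency matrix of $\overline G$. The equivalent form $\beta(G)-\alpha(G)\le n-1$ comes, as you say, from $\beta(A)=n-\alpha(\bar A)$, which follows from $L_A+L_{\bar A}=nI-J$.
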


\vspace{5pt}
\textbf{Remark:} Our proof of the Laplacian spread conjecture uses only the quadratic form of real symmetric matrices, Cauchy's inequality, complementary matrices, and the Rayleigh quotient characterisation of eigenvalues, without any complicated structural analysis of graphs. This makes the proof significantly more concise than that of M.~Einollahzadeh and M.~M.~Karkhaneei (2021).

\


\section{\bf Declaration of competing interest}\label{4}

There is no competing interest.


\begin{thebibliography}{99}

\bibitem{AA}B. Afshari, S. Akbari, Some results on the Laplacian spread conjecture, Linear Algebra Appl. 574 (2019) 22-29.

\bibitem{AAMM}B. Afshari, S. Akbari, M.J. Moghaddamzadeh, B. Mohar, The algebraic connectivity of a graph and its complement, Linear Algebra Appl. 555 (2018) 157-C162.

\bibitem{AT}F. Ashraf, B. Tayfeh-Rezaie, Nordhaus–Gaddum type inequalities for Laplacian and signless Laplacian eigenvalues, Electron. J. Combin. 21 (3) (2014), Paper 3.6, 13 pp.

\bibitem{AGRR}E. Andrade, H. Gomes, M. Robbiano, J. Rodriguez, Upper bounds on the Laplacian spread of graphs, Linear Algebra Appl. 492 (2016) 26-C37.

\bibitem{BTF}Y.H. Bao, Y.Y. Tan, Y.Z. Fan, The Laplacian spread of unicyclic graphs, Appl. Math. Lett. 22 (2009) 1011-C1015.

 \bibitem{BCEHK} W. Barrett, T. R. Cameron, E. Evans, H. T. Hall, M. Kempton, On the Laplacian spread of digraphs, Linear Algebra and its Applications 664 (2023) 126-C146

\bibitem{BM}J.A. Bondy, U.S.R. Murty, Graph Theory, Springer, 2008.



\bibitem{F1}Miroslav Fiedler, A property of eigenvectors of nonnegative symmetric matrices and its application to graph theory, Czechoslov, Math. J. 25 (1975) 619–633.

\bibitem{EK}M. Einollahzadeh, M.M. Karkhaneei, On the lower bound of the sum of the algebraic connectivity of a graph and its complement, J. Comb. Theory, Ser. B 151 (2021) 235-C249.


\bibitem{GS}G. Strang, Introduction to Linear Algebra, Wellesley-Cambridge Press, 2023.

\bibitem{LY}Z. You, B. Liu, The Laplacian spread of graphs, Czechoslovak Math. J. 62 (137) (2012) 155-C168.

\bibitem{ZSH}M. Zhai, J. Shu, Y. Hong, On the Laplacian spread of graphs, Appl. Math. Lett. 24 (2011) 2097–2101.



\bibitem{FXWL}Z.Y. Fan, J. Xu, Y. Wang, D. Liang, The Laplacian spread of a tree, Discrete Math. Theor. Comput. Sci. 10 (1) (2008) 79–86.


\bibitem{LW}Y. Liu, L. Wang, The Laplacian spread of bicyclic graphs, Adv. Math. (China) 40 (2011) 759–764.

\bibitem{CW}Y. Chen, L. Wang, The Laplacian spread of tricyclic graphs, Electron. J. Combin. 16 (2009), Research Paper 80, 18 pp.

\bibitem{L}Y. Liu, The Laplacian spread of cactuses, Discrete Math. Theor. Comput. Sci. 12 (2010) 35–40.

\bibitem{XM}Y. Xu, J. Meng, The Laplacian spread of quasi-tree graphs, Linear Algebra Appl. 435 (2011) 60–66.




\bibitem{CD}X. Chen, K.C. Das, Some results on the Laplacian spread of a graph, Linear Algebra Appl. 505
(2016) 245–260.


\end{thebibliography}
\end{document}